\newtheorem{theorem}{Theorem}[section]
\newtheorem{proposition}[theorem]{Proposition}
\newtheorem{lemma}[theorem]{Lemma}
\newtheorem{corollary}[theorem]{Corollary}
\newtheorem{definition}[theorem]{Definition}
\numberwithin{equation}{section}
\begin{document}
\baselineskip=15.5pt

\title[Parabolic $k$--ample bundles]{Parabolic $k$--ample bundles}

\author[I. Biswas]{Indranil Biswas}

\address{School of Mathematics, Tata Institute of Fundamental
Research, Homi Bhabha Road, Mumbai 400005, India}

\email{indranil@math.tifr.res.in}

\author[F. Laytimi]{Fatima Laytimi}

\address{Math\'ematiques -- B\^at. M2, Universit\'e Lille 1, F-59655
Villeneuve d'Ascq Cedex, France}

\email{Fatima.Laytimi@math.univ-lille1.fr}

\subjclass[2000]{14F05, 14F17}

\keywords{Parabolic bundle, $k$-ample bundle, projectivization,
tautological bundle}

\date{}

\begin{abstract}
We construct projectivization of a parabolic vector bundle and a
tautological line bundle over it. It is shown that a parabolic vector
bundle is ample if and only if the tautological line bundle is ample.
This allows us to generalize the notion of a $k$--ample bundle,
introduced by Sommese, to the context of parabolic bundles. A
parabolic vector bundle $E_*$ is defined
to be $k$--ample if the tautological
line bundle ${\mathcal O}_{{\mathbb P}(E_*)}(1)$ is $k$--ample.
We establish some properties of parabolic $k$--ample bundles.
\end{abstract}

\maketitle

\section{Introduction}

For an algebraic vector bundle $E$ over a complex projective
variety, let ${\mathbb P}(E)$ be the projective bundle parametrizing
hyperplanes in the fibers of $E$. The tautological quotient line
bundle on ${\mathbb P}(V)$ will be denoted by
${\mathcal O}_{{\mathbb P}(E)}(1)$. We recall that $E$ is called
ample if ${\mathcal O}_{{\mathbb P}(E)}(1)$ is ample.

Parabolic vector bundles on curves were introduced by Seshadri
in \cite{Se}. In \cite{MY}, Maruyama and Yokogawa generalized this
notion to higher dimensional varieties. In \cite{Bi2}, parabolic analog
of ample vector bundles were defined using the following standard
criterion for ample vector bundles: A vector bundle $E$ over a
complex projective variety $M$ is ample if and only if for any
vector bundle $F$ on $M$, there is an integer $n(F)$ such that
the vector bundle $S^n(E)\otimes F$
is generated by its global sections for all $n\, \geq\, n(F)$.
The Le Potier vanishing theorem for ample vector bundles and
the Hartshorne's criterion for ample vector bundles on curves were
generalized to parabolic ample bundles (see Theorem 4.4 and
Theorem 3.1 of \cite{Bi2}).

Given a parabolic vector bundle $E_*$, here we define ${\mathbb P}(E_*)$,
a parabolic analog of the projective bundle. This is done using 
the notion of ramified principal bundles (see \cite{BBN} and \cite{Bi4}
for ramified principal bundles). We also construct a tautological
line bundle ${\mathcal O}_{{\mathbb P}(E_*)}(1)$ on
${\mathbb P}(E_*)$.

In Proposition \ref{prop1} we show that
a parabolic vector bundle $E_*$ is ample if and only if the
tautological line bundle ${\mathcal O}_{{\mathbb P}(E_*)}(1)
\,\longrightarrow\, {\mathbb P}(E_*)$ is ample. Proposition
\ref{prop1} is deduced using a parabolic analog of the
above mentioned criterion for ample vector bundles in
terms of vanishing of higher cohomologies (this criterion
for parabolic vector bundles is proved in Theorem \ref{thm1}).

Sommese introduced the notion of a $k$--ample vector bundle
(see Definition 1.3 in page 232 of \cite{So}). We recall that
a line bundle $L$ over a complex projective
variety $M$ is called $k$--\textit{ample} if there is a positive
integer $t$ such that $L^{\otimes t}$ is base point free, and
all the fibers of the natural morphism $M\,\longrightarrow\,
\vert L^{\otimes t}\vert \,=\, {\mathbb P}
(H^0(M, L^{\otimes t}))$ are of dimension at most $k$.
A vector bundle $E$ is called $k$--\textit{ample} if the
tautological line bundle ${\mathcal O}_{{\mathbb P}(E)}(1)\,
\longrightarrow\,{\mathbb P}(E)$ is $k$--ample.

The constructions of the projectivization of a parabolic
bundle and the tautological
line bundle allow us to generalize the notion of $k$--ample
bundles to the context of parabolic bundles. More precisely, a
parabolic vector bundle $E_*$ is called $k$--\textit{ample} if
the tautological line bundle ${\mathcal O}_{{\mathbb P}(E_*)}(1)$
over ${\mathbb P}(E_*)$
is $k$--ample. We prove some properties of parabolic
$k$--ample bundles.

\section{Parabolic ample bundles}

Let $X$ be an irreducible smooth projective variety defined over
$\mathbb C$. Let $D\, \subset\, X$ be a simple normal crossing divisor;
this means that $D$ is effective and reduced, each irreducible component
of $D$ is smooth, and the components intersect transversally.
Let
\begin{equation}\label{decomp.-D}
D\, =\, \sum_{i=1}^\ell D_i
\end{equation}
be the decomposition of $D$ into irreducible components.
The above condition that the irreducible components of $D$
intersect transversally means that if
\begin{equation}\label{pt.-intsc.}
x\, \in\, D_{i_1}\cap D_{i_2}\cap \cdots \cap D_{i_k}\,
\subset\, D
\end{equation}
is a point where $k$ distinct components of $D$ meet, and $f_{i_j}$,
$j\,\in\, [1\, ,k]$, is the local equation of the divisor
$D_{i_j}$ around $x$, then $\{\text{d}f_{i_j}(x)\}$ is a linearly
independent subset of the holomorphic cotangent space $T^*_xX$ of
$X$ at $x$. This implies that for any choice of $k$ integers
\begin{equation}\label{k-int.}
1\, \leq \, i_1 \, <\, i_2\, <\, \cdots\, < \, i_k \, \leq\,\ell\, ,
\end{equation}
each connected component of
$D_{i_1}\bigcap D_{i_2}\bigcap \cdots \bigcap D_{i_k}$ is a smooth
subvariety of $X$.

Let $E_0$ be an algebraic vector bundle over $X$. For
each $i\,\in\, [1\, , \ell]$, let
\begin{equation}\label{divisor-filt.}
E_0\vert_{D_i}\, =\, F^i_1 \,\supsetneq\, F^i_2 \,\supsetneq\,
F^i_3 \,\supsetneq\, \cdots \,\supsetneq\, F^i_{m_i} \,\supsetneq\,
F^i_{m_i+1}\, =\, 0
\end{equation}
be a filtration by subbundles of the restriction of $E_0$ to $D_i$.
In other words, each $F^i_j$ is a subbundle of $E_0\vert_{D_i}$
with $\text{rank}(F^i_j) \, >\, \text{rank}(F^i_{j+1})$
for all $j\in [1\, , m_i]$.

A \textit{quasiparabolic} structure on $E_0$ over $D$ is a filtration
as above of each $E_0\vert_{D_i}$ satisfying the following extra
condition: Take any $k\,\in\, [1\, ,\ell]$,
and take integers $\{i_j\}_{j=1}^k$ as in \eqref{k-int.}.
If we fix some $F^{i_j}_{n_j}$, $n_j\, \in\,
[1\, , m_{i_j}]$, then over each connected component $S$
of $D_{i_1}\bigcap D_{i_2}\bigcap \cdots \bigcap D_{i_k}$,
the intersection
$$
\bigcap_{j=1}^k F^{i_j}_{n_j}\, \subset\,
E_0\vert_{D_{i_1}\cap \cdots \cap D_{i_k}}
$$
gives a subbundle of the restriction of $E_0$ to $S$.
It should be clarified that the rank of this
subbundle may depend on the choice of the connected component
$S\, \subset\,
D_{i_1}\bigcap \cdots \bigcap D_{i_k}$.

For a quasiparabolic structure as above, \textit{parabolic 
weights} are a collection of rational numbers
\begin{equation}\label{li}
0\, \leq\, \lambda^i_1\, < \, \lambda^i_2\, < \,
\lambda^i_3 \, < \,\cdots \, < \, \lambda^i_{m_i}
\, <\, 1\, ,
\end{equation}
where $i\,\in\, [1\, ,\ell]$. The parabolic weight $\lambda^i_j$
corresponds to the subbundle $F^i_j$ in \eqref{divisor-filt.}. A 
\textit{parabolic structure} on $E_0$ is a quasiparabolic
structure with parabolic weights. A vector bundle
over $X$ equipped
with a parabolic structure on it is also called a
\textit{parabolic vector bundle}. (See \cite{MY}, \cite{Se}.)

For notational convenience, a parabolic vector bundle defined
as above will be denoted by $E_*$, while $E_0$ will be referred to
as the underlying vector bundle for $E_*$. The divisor $D$ is called
the \textit{parabolic divisor} for $E_*$. We fix $D$ once and for
all, so the parabolic divisor of all parabolic vector bundles on $X$ 
considered here will be $D$.

The definitions of direct sum, tensor product and dual of vector
bundles extend naturally to parabolic vector bundles. Similarly,
symmetric and exterior powers of parabolic vector bundles are also
constructed (see \cite{MY}, \cite{Bi2}, \cite{Yo}).

We will now recall from \cite{Bi2} the definition of a parabolic
ample bundle.

A parabolic vector bundle $E_*$ is called \textit{ample} if
for every vector bundle $V\,\longrightarrow\, X$, there is
an integer $n(V)$ such that for all $n\,\geq\, n(V)$, the
vector bundle $S^n(E_*)_0\otimes V$ is generated by it global
sections, where $S^n(E_*)_0$ is the vector bundle underlying
the parabolic symmetric product $S^n(E_*)$. (See Definition 2.3
in page 514 of \cite{Bi2}.)

In Definition 2.3 of \cite{Bi2}, we take $V$ to be a coherent
sheaf. Since any coherent sheaf is a quotient of a vector bundle,
and the quotient of a vector bundle generated by global sections
is also generated by global sections, it is enough to check the
above criterion for locally free sheaves.

\begin{theorem}\label{thm1}
A parabolic vector bundle $E_*$ is ample if and only if for
every parabolic vector bundle $F_*$, there is
an integer $n(F_*)$ such that for all $n\,\geq\, n(F_*)$,
$$
H^i(X,\, (S^n(E_*)\otimes F_*)_0) \,=\, 0
$$
for all $i\,\geq \, 1$, where $(S^n(E_*)\otimes F_*)_0$
is the vector bundle underlying the parabolic tensor
product $S^n(E_*)\otimes F_*$.
\end{theorem}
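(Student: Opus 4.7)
The plan is to reduce both implications to the ordinary ample case by means of the covering correspondence identifying parabolic bundles on $(X,D)$ with equivariant bundles on a ramified Galois cover. Given $E_*$, and for each $F_*$ to be considered, I choose a positive integer $N$ divisible by the denominators of all parabolic weights appearing in $E_*$ and $F_*$. By the Kawamata--Biswas construction (see \cite{Bi2}, \cite{BBN}, \cite{Bi4}), there is then a connected smooth projective Galois cover $p\,:\,Y\,\longrightarrow\, X$ with finite Galois group $\Gamma$, ramified along $D$ with ramification index $N$, under which parabolic bundles on $(X,D)$ with weights in $\frac{1}{N}\mathbb{Z}$ correspond equivalently to $\Gamma$--equivariant vector bundles on $Y$ of a prescribed local type along $p^{-1}(D)$. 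This correspondence intertwines tensor products and symmetric powers; the underlying bundle $A_0$ of a parabolic bundle $A_*$ corresponds to the $\Gamma$--invariant direct image $(p_*\widetilde A)^\Gamma$ of its associated equivariant bundle $\widetilde A$; and by \cite{Bi2}, $E_*$ is parabolic ample if and only if the ordinary bundle $\widetilde E$ on $Y$ is ample. In particular,
\[
H^i\bigl(X,\, (S^n(E_*)\otimes F_*)_0\bigr)\,=\,H^i\bigl(Y,\, S^n(\widetilde E)\otimes \widetilde F\bigr)^{\Gamma}.
\]

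For the forward direction, assuming $E_*$ is parabolic ample, $\widetilde E$ is ample on $Y$. For any $F_*$, Serre's classical vanishing theorem on $Y$ then produces an integer $n(\widetilde F)$ with $H^i(Y,\, S^n(\widetilde E)\otimes \widetilde F)\,=\,0$ for all $i\,\geq\,1$ and $n\,\geq\,n(\widetilde F)$. Passing to $\Gamma$--invariants, an exact operation in characteristic zero, delivers the required vanishing of $H^i(X,\, (S^n(E_*)\otimes F_*)_0)$.

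For the reverse direction, I verify the definition of parabolic ampleness directly. Fix a vector bundle $V$ on $X$ and a globally generated ample line bundle $L$ on $X$. For each $i\,\in\,\{1,\ldots,\dim X\}$ I apply the hypothesis to the parabolic bundle $F^{(i)}_*$ with trivial parabolic structure and underlying bundle $V\otimes L^{-i}$; since tensoring by a trivially--parabolic factor preserves the underlying bundle (equivalently, on $Y$ one has $\widetilde{F^{(i)}}=p^*(V\otimes L^{-i})$, and the projection formula gives $(p_*(S^n(\widetilde E)\otimes p^*(V\otimes L^{-i})))^\Gamma = S^n(E_*)_0\otimes V\otimes L^{-i}$), the hypothesis yields
\[
H^i\bigl(X,\, S^n(E_*)_0\otimes V\otimes L^{-i}\bigr)\,=\,0
\]
for all $i\,\geq\,1$ and all $n$ beyond an $n_0$ depending only on $V$. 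Castelnuovo--Mumford regularity with respect to $L$ then forces $S^n(E_*)_0\otimes V$ to be generated by its global sections, which is the definition of parabolic ampleness.

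The principal obstacle I anticipate is the careful deployment of the covering correspondence: one must verify that tensor and symmetric products are intertwined, that underlying bundles correspond to $\Gamma$--invariants, that $N$ can be freely enlarged to accommodate each $F_*$ without disturbing the ampleness of $\widetilde E$ (which is preserved under further finite pullbacks), and above all that the identification ``$E_*$ parabolic ample $\iff$ $\widetilde E$ ample on $Y$'' from \cite{Bi2} carries over cleanly in the present setup. Once these translational points are settled, both directions collapse to classical results: Serre vanishing on $Y$ for the forward implication, and Castelnuovo--Mumford regularity on $X$ for the reverse.
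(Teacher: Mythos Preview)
Your forward implication is exactly the paper's: pass to a Galois cover $\gamma:Y\to X$ on which $E_*$ and $F_*$ correspond to $\Gamma$--linearized bundles $E',F'$, use that parabolic ampleness of $E_*$ is equivalent to ordinary ampleness of $E'$, apply Hartshorne's vanishing to $S^n(E')\otimes F'$, and descend via the $\Gamma$--invariant direct summand $(\gamma_*(S^n(E')\otimes F'))^\Gamma=(S^n(E_*)\otimes F_*)_0$.

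For the reverse implication your route is genuinely different. The paper argues as follows: first it upgrades the hypothesis from parabolic $F_*$ to \emph{arbitrary coherent sheaves} $W$, by writing $W$ as a quotient of a locally free $V$ (given trivial parabolic structure) and running a descending induction on the cohomological degree, using that $H^i$ vanishes above $\dim X$; then, applying this to $W=F\otimes\mathcal I_x$ for each point $x$, it obtains surjectivity of $H^0(X,S^n(E_*)_0\otimes F)\to (S^n(E_*)_0\otimes F)_x$, and concludes global generation by the Noetherian argument of \cite{Ha}. Your argument instead stays with finitely many locally free test bundles $V\otimes L^{-i}$, $1\le i\le\dim X$, and invokes Castelnuovo--Mumford regularity with respect to a globally generated ample $L$ to get global generation of $S^n(E_*)_0\otimes V$ directly. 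This is cleaner and avoids both the induction on $i$ and the pointwise ideal--sheaf step; moreover it never actually needs the cover in the reverse direction (the identity $(S^n(E_*)\otimes F^{(i)}_*)_0=S^n(E_*)_0\otimes V\otimes L^{-i}$ follows straight from the definition of parabolic tensor product with a trivially parabolic factor). The paper's approach, on the other hand, yields the stronger intermediate statement that $H^i(X,S^n(E_*)_0\otimes W)=0$ for every coherent $W$ and $n\gg0$, which is not needed here but is of independent interest.
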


\begin{proof}
First assume that $E_*$ is ample. Take any parabolic
vector bundle $F_*$ on $X$.

In \cite{Bi1}, a bijective correspondence between parabolic
bundles with parabolic weights of the form $a/a_0$, where
$a_0$ is a fixed integer, and orbifold vector bundles is 
established. According
to it, there is a finite (ramified) algebraic
Galois covering
\begin{equation}\label{e1}
\gamma\, :\, Y\,\longrightarrow\, X\, ,
\end{equation}
where $Y$ is an irreducible smooth projective variety
with the following property: there are $\text{Gal}
(\gamma)$--linearized vector bundles $E'$ and
$F'$ over $Y$ that correspond to the parabolic vector
bundles $E_*$ and $F_*$ respectively.

For notational convenience, the Galois group
$\text{Gal}(\gamma)$ will be denoted by $\Gamma$.

Since $E_*$ is parabolic ample, the vector bundle $E'$ is ample
(see Lemma 4.6 in page 522 of \cite{Bi2}). As $E'$ is ample,
there is an integer $m_0$ such that for all $n\,\geq \, m_0$,
\begin{equation}\label{a1}
H^i(Y, \, S^n(E')\otimes F')\,=\, 0
\end{equation}
for all $i\,\geq \, 1$ (see Proposition (3.3) in page
70 of \cite{Ha}).
The map $\gamma$ being finite,
for any vector bundle $W$ on $Y$, we have a canonical
identification $H^j(Y,\, W)\,
= \, H^j(X,\, \gamma_*W)$. Therefore, from \eqref{a1},
\begin{equation}\label{e2}
H^i(X, \, \gamma_*(S^n(E')\otimes F'))\,=\, 0
\end{equation}
for all $i\,\geq \, 1$ and all $n\,\geq \, m_0$.

The above mentioned correspondence between parabolic
vector bundles and $\Gamma$--linearized vector bundles
takes the tensor product of any two $\Gamma$--linearized vector
bundles to the parabolic tensor product of the corresponding
parabolic vector bundles. Therefore, the
parabolic vector bundle over $X$ corresponding to
the $\Gamma$--linearized vector bundle $S^n(E')\otimes F'$
is the parabolic tensor product
$S^n(E_*)\otimes F_*$. Hence
$$
(\gamma_*(S^n(E')\otimes F'))^\Gamma \,=\, 
(S^n(E_*)\otimes F_*)_0
$$
(see (2.9) in page 310 of \cite{Bi1}), where
\begin{equation}\label{e3}
(\gamma_*(S^n(E')\otimes F'))^\Gamma
\, \subset\, \gamma_*(S^n(E')\otimes F')
\end{equation}
is the sheaf of $\Gamma$--invariants. We note that the
subsheaf in \eqref{e3} is a direct summand (see
the proof of Proposition 4.10 in page 525 of \cite{Bi2}).
Therefore, from \eqref{e2} it follows that
\begin{equation}\label{f1}
H^i(X, \, (S^n(E_*)\otimes F_*)_0)\,=\, 0
\end{equation}
for all $n\,\geq \, m_0$ and all $i\,\geq \, 1$.

To prove the converse, assume that
given any parabolic vector bundle $F_*$, there is
an integer $n(F_*)$ such that for all $n\,\geq\, n(F_*)$,
\begin{equation}\label{e5}
H^i(X,\, (S^n(E_*)\otimes F_*)_0) \,=\, 0
\end{equation}
for all $i\,\geq \, 1$. We will prove that $E_*$ is
parabolic ample.

Take any coherent sheaf $W$ on $X$. We will show that
\begin{equation}\label{e4}
H^i(X,\, S^n(E_*)_0\otimes W)\,=\, 0
\end{equation}
for all $i\,\geq\, 1$ and all $n$ sufficiently large, where
$S^n(E_*)_0$ is the vector bundle underlying the parabolic
symmetric product $S^n(E_*)$.

The coherent sheaf $W$ is a quotient of some locally free coherent
sheaf. Let $V$ be a locally free sheaf on $X$, and let
$$
\rho\, :\, V\,\longrightarrow\, W
$$
be a surjective homomorphism. The kernel of $\rho$ will
be denoted by $\mathcal K$. So we have a short exact sequence
\begin{equation}\label{e6}
0\,\longrightarrow\, S^n(E_*)_0\otimes {\mathcal K} \,
\longrightarrow\,S^n(E_*)_0\otimes V \,\stackrel{\text{Id}
\otimes\rho}{\longrightarrow}\,(S^n(E_*)_0\otimes W
\,\longrightarrow\, 0
\end{equation}
(since $(S^n(E_*)_0$ is locally free, tensoring with it preserves
exactness of a sequence).
Equip $V$ with the trivial parabolic structure (so it
has no nonzero parabolic weight). Consequently, $(S^n(E_*)\otimes V)_0
\,=\, S^n(E_*)_0\otimes V$. Invoking the given condition
in \eqref{e5}, from the long exact sequence of cohomologies
associated to the short exact sequence in
\eqref{e6} we conclude that given any $i\,\geq\, 1$,
$$
H^i(X,\, S^n(E_*)_0\otimes W)\,=\, 0
$$
for all $n$ sufficiently large if $H^{i+1}(X,\, S^m(E_*)_0\otimes
{\mathcal K})\,=\, 0$ for all $m$ sufficiently large. Using this
inductively, and noting that all cohomologies of degree larger
than $\dim X$ vanish, we conclude that \eqref{e4} holds.

For any point $x\,\in\, X$, let ${\mathcal I}_x\,\subset\, 
{\mathcal O}_X$ be the ideal sheaf. For any vector bundle
$F$ on $X$, consider the short exact sequence of coherent sheaves
$$
0\,\longrightarrow\, S^n(E_*)_0\otimes F\otimes{\mathcal I}_x \,
\longrightarrow\, S^n(E_*)_0\otimes F \,\longrightarrow
\,((S^n(E_*)_0\otimes F)_x \,\longrightarrow\, 0\, .
$$
Using the above observation that
$H^1(X,\, S^n(E_*)_0\otimes F\otimes{\mathcal I}_x)\,=\, 0$
for all $n$ sufficiently large, from the long exact sequence of
cohomologies associated to this short exact sequence we conclude
that for all $n$ sufficiently large,
the fiber $((S^n(E_*)_0\otimes F)_x$ is generated by the
global sections of $S^n(E_*)_0\otimes F$. Now
using the Noetherian property of $X$ it follows that
for all $n$ sufficiently large, the vector
bundle $S^n(E_*)_0\otimes F$ is generated by its global sections
(see Proposition 2.1 in page 65 of \cite{Ha}).
Therefore, the parabolic vector bundle $E_*$ is ample.
\end{proof}

\section{Ramified bundles and ampleness}

\subsection{Ramified bundles and parabolic bundles}
The complement of $D$ in $X$ will be denoted by $X-D$;
we will avoid the more standard notation $X\setminus D$
in order to avoid any confusion with left quotient which
we will frequently need.

Let
$$
\varphi\, :\, E_{\text{GL}(r, {\mathbb C})}\, \longrightarrow\, X
$$
be a ramified principal $\text{GL}(r, {\mathbb C})$--bundle
with ramification over $D$
(see \cite{BBN}, \cite{Bi3}, \cite{Bi4} for the definition). We
briefly recall its defining properties.
The total space $E_{\text{GL}(r, {\mathbb C})}$ is a
smooth complex variety equipped with
an algebraic right action of $\text{GL}(r, {\mathbb C})$
$$
f\, :\,E_{\text{GL}(r, {\mathbb C})}\times \text{GL}(r,
{\mathbb C})\, \longrightarrow\, E_{\text{GL}(r, {\mathbb C})}\, ,
$$
and the following conditions hold:
\begin{enumerate}
\item{} $\varphi\circ f \, =\, \varphi\circ p_1$, where $p_1$ is
the natural projection of $E_{\text{GL}(r, {\mathbb C})}\times
\text{GL}(r, {\mathbb C})$ to $E_{\text{GL}(r, {\mathbb C})}$,

\item{} for each point $x\, \in\, X$, the action of
$\text{GL}(r, {\mathbb C})$ on the
reduced fiber $\varphi^{-1}(x)_{\text{red}}$ is transitive,

\item{} the restriction of $\varphi$ to $\varphi^{-1}(X
- D)$ makes $\varphi^{-1}(X- D)$ a principal
$\text{GL}(r, {\mathbb C})$--bundle over $X- D$,

\item{} for each irreducible component $D_i\, \subset\, D$,
the reduced inverse image $\varphi^{-1}(D_i)_{\text{red}}$ is a
smooth divisor and
$$
\widehat{D}\, :=\, \sum_{i=1}^\ell \varphi^{-1}(D_i)_{\text{red}}
$$
is a normal crossing divisor on $E_{\text{GL}(r, {\mathbb C})}$, and

\item{} for any point $x$ of $D$, and any point
$z\, \in\, \varphi^{-1}(x)$, the isotropy
group
\begin{equation}\label{e8}
G_z\, \subset\,\text{GL}(r, {\mathbb C}) \, ,
\end{equation}
for the action of $\text{GL}(r, {\mathbb C})$ on
$E_{\text{GL}(r, {\mathbb C})}$, is a finite group, and if
$x$ is a smooth point of $D$, then the natural action of 
on the quotient line $T_zE_{\text{GL}(r,
{\mathbb C})}/T_z\varphi^{-1}(D)_{\text{red}}$ is faithful.
\end{enumerate}

We will note some some properties of the isotropy subgroup $G_z$ in 
condition (5) of the above list. Let
$$
D^{\rm sm}\, \subset\, D
$$
be the smooth locus of the divisor. Take any $x\, \in\,D^{\rm sm}$,
and take any $z\, \in\, \varphi^{-1}(x)$.
The group of linear automorphisms of the complex line
$T_zE_{\text{GL}(r,
{\mathbb C})}/T_z\varphi^{-1}(D)_{\text{red}}$ in condition (5)
is ${\mathbb C}^*$, and any finite
subgroup of ${\mathbb C}^*$ is a cyclic group. Therefore, we
conclude that the isotropy subgroup $G_z$ is a
finite cyclic group if $x\, \in\,D^{\rm sm}$ (because $G_z$ acts 
faithfully on
$T_zE_{\text{GL}(r, {\mathbb C})}/T_z\varphi^{-1}(D)_{\text{red}}$).
Take any $z'\,\in\, E_{\text{GL}(r, {\mathbb C})}$ such that
$\varphi(z')\, =\, \varphi(z)$. There is an element $g\,\in\,
\text{GL}(r, {\mathbb C})$ such that $f(z\, ,g)\,=\, z'$. Therefore,
$G_z$ is isomorphic to $G_{z'}$. As $z$ moves over a fixed
connected component of $\varphi^{-1}(D^{\rm sm})$, the order
of the finite cyclic group $G_z$ remains unchanged. Fix an
integer $m\, \geq\, 2$. Let
$$
S(m) \, \subset\, \text{GL}(r, {\mathbb C})
$$
be the subset of all elements of order exactly $m$. This
$S(m)$ is a disjoint union of finitely many closed orbits
for the the adjoint action of $\text{GL}(r, {\mathbb C})$
on itself. Therefore, as $z$ moves over a fixed
connected component of $\varphi^{-1}(D^{\rm sm})$, the conjugacy
class of the subgroup $G_z\, \subset\, \text{GL}(r, {\mathbb C})$
remains unchanged.

There is a natural bijective correspondence between the ramified
principal $\text{GL}(r, {\mathbb C})$--bundles with ramification
over $D$ and the parabolic vector bundles
of rank $r$ with $D$ as the parabolic divisor (see
\cite{BBN}, \cite{Bi3}). This correspondence is recalled below.

Take the standard $\text{GL}(r, {\mathbb C})$--module ${\mathbb C}^r$;
for notational convenience, we will denote this $\text{GL}(r, 
{\mathbb C})$--module by $V$. Recall that
$$
E^0_{\text{GL}(r, {\mathbb C})}\, :=\, \varphi^{-1}(X- D)
\,\longrightarrow\,X- D
$$
is a usual principal $\text{GL}(r, {\mathbb C})$--bundle. Let
$$
E^0_V \,=\,E^0_{\text{GL}(r, {\mathbb C})}(V) \,:=\,
E^0_{\text{GL}(r, {\mathbb C})}\times^{\text{GL}(r, {\mathbb C})} V
\,\longrightarrow\, X- D
$$
be the associated vector bundle. This vector bundle has a natural
extension to $X$ as a parabolic vector bundle (its construction
is similar to the construction of a parabolic vector bundle from
an orbifold vector bundle; see \cite{Bi1}). Therefore,
starting from a ramified principal $\text{GL}(r,
{\mathbb C})$--bundle $\varphi\, :\, E_{\text{GL}(r, {\mathbb 
C})}\,\longrightarrow\, X$ we get a
parabolic vector bundles over $X$ with parabolic structure over $D$.

We will give an alternative description of the correspondence.

Let $E_*$ be a parabolic vector bundle of rank $r$ over $X$.
Let $E'\,\longrightarrow\, Y$ be the corresponding
$\text{Gal}(\gamma)$--linearized vector bundle, where $\gamma$ is the
Galois covering in \eqref{e1}. As before, $\text{Gal}(\gamma)$
will be denoted by $\Gamma$. Let $$E'_{\text{GL}(r, {\mathbb C})}
\,\longrightarrow\, Y$$
be the principal $\text{GL}(r, {\mathbb C})$--bundle defined by $E'$.
So the fiber of $E'_{\text{GL}(r, {\mathbb C})}$ over any point
$y\,\in\, Y$ is the space of all
linear isomorphisms from ${\mathbb C}^r$ to the fiber
$E'_y$. The $\Gamma$--linearization of $E'$ produces a left action
of the Galois group $\Gamma$ on the total space of $E'_{\text{GL}
(r, {\mathbb C})}$. The actions of $\text{GL}(r, {\mathbb C})$ and 
$\Gamma$ on $E'_{\text{GL}(r, {\mathbb C})}$ commute, so the
quotient $\Gamma\backslash E'_{\text{GL}(r, {\mathbb C})}$
for the action of $\Gamma$ on $E'_{\text{GL}(r, {\mathbb C})}$ has an
action of $\text{GL}(r, {\mathbb C})$. The projection of
$E'_{\text{GL}(r, {\mathbb C})}$ to $Y$ produces a projection of
$\Gamma\backslash E'_{\text{GL}(r, {\mathbb C})}$ to
$\Gamma\backslash Y\, =\, X$. This resulting morphism
$$
\Gamma\backslash E'_{\text{GL}(r, {\mathbb C})}
\,\longrightarrow\, X
$$
defines a ramified principal $\text{GL}(r, {\mathbb C})$--bundle.

Conversely, if $F_{\text{GL}(r, {\mathbb C})}\, \longrightarrow\,
X$ is a ramified principal $\text{GL}(r, {\mathbb C})$--bundle,
then there is a finite (ramified) Galois covering
$$
\gamma\, :\, Y\, \longrightarrow\, X
$$
such that the normalization $\widetilde{F_{\text{GL}(r, {\mathbb C})}
\times_X Y}$ of the fiber product
$F_{\text{GL}(r, {\mathbb C})}\times_X Y$ is smooth. The projection 
$\widetilde{F_{\text{GL}(r, {\mathbb C})}\times_X Y}
\,\longrightarrow\, Y$ is a principal $\text{GL}(r,
{\mathbb C})$--bundle equipped with an action of the Galois
group $\Gamma\,:=\,
\text{Gal}(\gamma)$. Let $F_V\,:=\, \widetilde{F_{\text{GL}(r, 
{\mathbb C})}\times_X Y}(V)$ be the vector bundle over
$Y$ associated to the principal $\text{GL}(r,
{\mathbb C})$--bundle $\widetilde{F_{\text{GL}(r, {\mathbb C})}
\times_X Y}$ for the standard $\text{GL}(r, {\mathbb C})$--module
$V \,:=\, {\mathbb C}^r$. The action of $\Gamma$
on $Y$ induces an action of $\Gamma$ on $\widetilde{F_{\text{GL}(r,
{\mathbb C})}\times_X Y}$; this action of $\Gamma$ on
$\widetilde{F_{\text{GL}(r, {\mathbb C})}\times_X Y}$ 
commutes with the action of
${\text{GL}(r, {\mathbb C})}$ on $\widetilde{F_{\text{GL}(r, 
{\mathbb C})}\times_X Y}$. Hence the action of $\Gamma$ on
$\widetilde{F_{\text{GL}(r, {\mathbb C})}\times_X Y}$ induces 
an action of $\Gamma$ on the above defined associated bundle
$F_V$ making $F_V$ a $\Gamma$--linearized vector
bundle. Let $E_*$ be the parabolic vector bundle of rank $r$
over $X$ associated
to this $\Gamma$--linearized vector bundle $F_V$.

The above construction of a parabolic vector bundle of rank
$r$ from a ramified principal ${\text{GL}(r, {\mathbb C})}
$--bundle is the inverse of the earlier construction
of a ramified principal ${\text{GL}(r, {\mathbb C})}
$--bundle from a parabolic vector bundle.

\subsection{Projectivization and the tautological line bundle}

Let $E_*$ be a parabolic vector bundle over $X$ of rank $r$. Let
\begin{equation}\label{ee}
\varphi\, :\, E_{\text{GL}(r, {\mathbb C})}\,\longrightarrow\, X
\end{equation}
be the corresponding ramified principal
$\text{GL}(r, {\mathbb C})$--bundle. Let ${\mathbb P}^{r-1}$ be
the projective space parametrizing the
hyperplanes in ${\mathbb C}^r$. The standard
action of $\text{GL}(r, {\mathbb C})$ on ${\mathbb C}^r$ produces
an action of $\text{GL}(r, {\mathbb C})$ on ${\mathbb P}^{r-1}$. Let 
\begin{equation}\label{e7}
{\mathbb P}(E_*) \, = \, E_{\text{GL}(r, {\mathbb C})}({\mathbb P}^{r-1})
\, :=\, E_{\text{GL}(r, {\mathbb C})}\times^{\text{GL}(r,
{\mathbb C})} {\mathbb P}^{r-1}\,\longrightarrow\, X
\end{equation}
be the associated (ramified) fiber bundle.

\begin{definition}\label{def1}
{\rm We will call ${\mathbb P}(E_*)$ the} projective bundle {\rm
associated to the parabolic vector bundle $E_*$.}
\end{definition}

Take a point $x\,\in\,D$; it should be
clarified that $x$ need not be a smooth point of $D$.
Take any $z\, in\, \varphi^{-1}(x)$, where $\varphi$
is the morphism in \eqref{ee}. As in \eqref{e8}, let
$G_z\,\subset\, \text{GL}(r, {\mathbb C})$
be the isotropy subgroup for $z$
for the action of $\text{GL}(r, {\mathbb C})$ on $E_{\text{GL}(r, 
{\mathbb C})}$. We recall that $G_z$ is a finite group. Let $n_x$
be the order of $G_z$ (it is independent of the choice of $z$ because
$\text{GL}(r, {\mathbb C})$ acts transitively on
the fiber $\varphi^{-1}(x)$). The number of
distinct integers $n_x$, $x\,\in\,D$, is finite.
Let
\begin{equation}\label{e9}
N(E_*)\, :=\, \text{l.c.m.}\{n_x\}_{x\in D}
\end{equation}
be the least common multiple of all these integers $n_x$.

As before, ${\mathbb P}^{r-1}$ is the projective
space parametrizing the hyperplanes in ${\mathbb C}^r$.
For any point $y\,\in\, {\mathbb P}^{r-1}$, let
\begin{equation}\label{e10}
H_y\,\subset\, \text{GL}(r, {\mathbb C})
\end{equation}
be the isotropy subgroup for the action of $\text{GL}(r, {\mathbb C})$
on ${\mathbb P}^{r-1}$ constructed using the standard action of
$\text{GL}(r, {\mathbb C})$ on ${\mathbb C}^r$. So $H_y$ is
a maximal parabolic subgroup of $\text{GL}(r, {\mathbb C})$.
Let ${\mathcal O}_{{\mathbb P}^{r-1}}(1)\,\longrightarrow \,
{\mathbb P}^{r-1}$ be the tautological quotient line bundle. The group
$H_y$ in \eqref{e10} acts on the fiber of
${\mathcal O}_{{\mathbb P}^{r-1}}(1)$ over the point $y$.

{}From the definition of $N(E_*)$ in \eqref{e9} it follows for any
$z\,\in\, \varphi^{-1}(D)$ and any $y\,\in\, {\mathbb P}^{r-1}$,
the group $G_z\bigcap H_y\,\subset\, \text{GL}(r, {\mathbb C})$
acts trivially on the fiber of the line bundle
$${\mathcal O}_{{\mathbb P}^{r-1}}(N(E_*))\, :=\,
{\mathcal O}_{{\mathbb P}^{r-1}}(1)^{\otimes N(E_*)}
$$
over the point $y$; the group $G_z$ is defined in \eqref{e8}.

Consider the action of $\text{GL}(r, {\mathbb C})$ on the total space
of the line bundle ${\mathcal O}_{{\mathbb P}^{r-1}}
(N(E_*))$ constructed using the standard action of
$\text{GL}(r, {\mathbb C})$ on ${\mathbb C}^r$. Let
$$
E_{\text{GL}(r, {\mathbb C})}({\mathcal O}_{{\mathbb P}^{r-1}}
(N(E_*)))
\, :=\, E_{\text{GL}(r, {\mathbb C})}\times^{\text{GL}(r, {\mathbb C})}
{\mathcal O}_{{\mathbb P}^{r-1}} 
(N(E_*)) \,\longrightarrow\, X
$$
be the associated fiber bundle. Since the natural projection
$$
{\mathcal O}_{{\mathbb P}^{r-1}} (N(E_*))\,\longrightarrow\,
{\mathbb P}^{r-1}
$$
intertwines the actions of $\text{GL}(r, {\mathbb C})$ on
${\mathcal O}_{{\mathbb P}^{r-1}} (N(E_*))$ and ${\mathbb P}^{r-1}$,
it produces a projection between the associated bundles
\begin{equation}\label{e11}
E_{\text{GL}(r, {\mathbb C})}({\mathcal O}_{{\mathbb P}^{r-1}}
(N(E_*))) \,\longrightarrow\, {\mathbb P}(E_*)\, ,
\end{equation}
where ${\mathbb P}(E_*)$ is the associated bundle constructed
in \eqref{e7}.

Using the above observation that $G_z\bigcap H_y$ acts trivially on
the fiber of ${\mathcal O}_{{\mathbb P}^{r-1}} (N(E_*))$ over $y$ it
follows immediately that $E_{\text{GL}(r, {\mathbb C})}
({\mathcal O}_{{\mathbb P}^{r-1}} (N(E_*)))$ in \eqref{e11} is
an algebraic line bundle over the variety ${\mathbb P}(E_*)$.

\begin{definition}\label{def2}
{\rm The line bundle $E_{\text{GL}(r, {\mathbb C})}({\mathcal O}_{
{\mathbb P}^{r-1}}(N(E_*)))\,\longrightarrow\, {\mathbb P}(E_*)$ will be
called the} tautological line bundle; {\rm this tautological line bundle
will be denoted by ${\mathcal O}_{{\mathbb P}(E_*)}(1)$.}
\end{definition}

For any positive integer $m$, the line bundle ${\mathcal
O}_{{\mathbb P}(E_*)}(1)^{\otimes m}$ (respectively,
$({\mathcal O}_{{\mathbb P}(E_*)}(1)^{\otimes m})^*$)
will be denoted by ${\mathcal O}_{{\mathbb P}(E_*)}(m)$
(respectively, ${\mathcal O}_{{\mathbb P}(E_*)}(-m)$).
Also, ${\mathcal O}_{{\mathbb P}(E_*)}(0)$ will denote
the trivial line bundle.

\begin{proposition}\label{prop1}
A parabolic vector bundle $E_*$ is ample if and only if the
tautological line bundle ${\mathcal O}_{{\mathbb P}(E_*)}(1)
\,\longrightarrow\, {\mathbb P}(E_*)$ is ample.
\end{proposition}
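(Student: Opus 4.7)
The plan is to reduce to the classical equivalence ``$V$ is ample iff ${\mathcal O}_{{\mathbb P}(V)}(1)$ is ample'' on the Galois cover $\gamma\, :\, Y\,\longrightarrow\, X$ of \eqref{e1}, via the correspondence between parabolic vector bundles and $\Gamma$--linearized vector bundles recalled in Section~3.1. Let $E'$ be the $\Gamma$--linearized vector bundle on $Y$ associated to $E_*$. First I would exhibit a natural finite surjective morphism
$$
\widetilde{\gamma}\, :\, {\mathbb P}(E')\,\longrightarrow\, {\mathbb P}(E_*)
$$
realising ${\mathbb P}(E_*)$ as the $\Gamma$--quotient of the usual projective bundle ${\mathbb P}(E')\,\longrightarrow\, Y$. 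This is essentially a reformulation of the construction of Section~3.1: the ramified principal bundle $E_{\text{GL}(r, {\mathbb C})}$ in \eqref{ee} is itself $\Gamma\backslash E'_{\text{GL}(r, {\mathbb C})}$, and the associated fibre bundle in \eqref{e7} is therefore the $\Gamma$--quotient of the usual ${\mathbb P}^{r-1}$--bundle $E'_{\text{GL}(r,{\mathbb C})}\times^{\text{GL}(r,{\mathbb C})}{\mathbb P}^{r-1}\,=\,{\mathbb P}(E')$. The map $\widetilde{\gamma}$ is finite over the whole of $X$ and lies over $\gamma$.

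Next I would verify the key identification
$$
\widetilde{\gamma}^*\, {\mathcal O}_{{\mathbb P}(E_*)}(1)\,\cong\,
{\mathcal O}_{{\mathbb P}(E')}(N(E_*))\, ,
$$
where $N(E_*)$ is as in \eqref{e9}. This is precisely the point at which $N(E_*)$ plays its role: by Definition~\ref{def2}, ${\mathcal O}_{{\mathbb P}(E_*)}(1)$ is the fibre bundle associated to the $\text{GL}(r, {\mathbb C})$--module ${\mathcal O}_{{\mathbb P}^{r-1}}(N(E_*))$, and the isotropy groups $G_z$ act trivially on its fibres only after passage to the $N(E_*)$--th power. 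Pulling back along the $\Gamma$--quotient $\widetilde{\gamma}$ therefore recovers the usual tautological line bundle on ${\mathbb P}(E')$ raised to the power $N(E_*)$.

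With these two ingredients I would chain together the following equivalences. By Lemma~4.6 of \cite{Bi2} (the same tool used in the proof of Theorem~\ref{thm1}, where it is deduced from the $\Gamma$--invariants calculation on $\gamma$--pushforwards and the cohomological criterion of Theorem~\ref{thm1}), $E_*$ is parabolic ample if and only if $E'$ is an ample vector bundle on $Y$. The classical Hartshorne criterion gives $E'$ ample if and only if ${\mathcal O}_{{\mathbb P}(E')}(1)$ is ample. Since ampleness of a line bundle is insensitive to positive tensor powers, this is equivalent to ${\mathcal O}_{{\mathbb P}(E')}(N(E_*))\,=\,\widetilde{\gamma}^*{\mathcal O}_{{\mathbb P}(E_*)}(1)$ being ample. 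Finally, since $\widetilde{\gamma}$ is a finite surjective morphism between projective varieties, a line bundle on ${\mathbb P}(E_*)$ is ample if and only if its $\widetilde{\gamma}$--pullback is ample. Combining these steps yields ${\mathcal O}_{{\mathbb P}(E_*)}(1)$ ample in both directions.

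The main obstacle is the construction of $\widetilde{\gamma}$ together with the identification of the line bundle pullback. The bookkeeping consists in checking that $\Gamma\backslash{\mathbb P}(E')$ agrees, as a scheme over $X$, with the associated fibre bundle in \eqref{e7}, and that although ${\mathcal O}_{{\mathbb P}(E')}(1)$ is only projectively $\Gamma$--equivariant, its $N(E_*)$--th power does descend as an honest line bundle on ${\mathbb P}(E_*)$ -- and that this descent gives exactly ${\mathcal O}_{{\mathbb P}(E_*)}(1)$ as defined in Definition~\ref{def2}. Once this is verified, the equivalence follows formally from three standard principles: the projectivization criterion for ordinary bundles on $Y$, invariance of ampleness under positive tensor powers, and invariance of line bundle ampleness under pullback along a finite surjective morphism.
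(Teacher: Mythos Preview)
Your proposal is correct and follows essentially the same route as the paper: identify ${\mathbb P}(E_*)$ with $\Gamma\backslash{\mathbb P}(E')$, identify the pullback of ${\mathcal O}_{{\mathbb P}(E_*)}(1)$ with ${\mathcal O}_{{\mathbb P}(E')}(N(E_*))$, and use the classical criterion on $Y$ together with the behavior of ampleness under finite surjective maps. The only organizational difference is that for the implication ``$E'$ ample $\Rightarrow$ $E_*$ ample'' the paper does not cite Lemma~4.6 of \cite{Bi2} but instead writes out the argument via Theorem~\ref{thm1} (the $\Gamma$--invariant direct summand of $\gamma_*(S^n(E')\otimes F')$), which is exactly the justification you allude to in your parenthetical.
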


\begin{proof}
First assume that the parabolic vector bundle $E_*$ is ample.
Let $E'\,\longrightarrow\, Y$ be the corresponding
$\text{Gal}(\gamma)$--linearized vector bundle over $Y$, where
$\gamma$ is the covering in \eqref{e1}. The vector bundle
$E'$ is ample because $E_*$ is parabolic ample (Lemma 4.6 in
page 522 of \cite{Bi2}). Hence the tautological quotient line
bundle ${\mathcal O}_{{\mathbb P}(E')}(1)\,\longrightarrow\,
{\mathbb P}(E')$ is ample, where ${\mathbb P}(E')$ is the
space of all hyperplanes in the fibers of $E'$. Since
${\mathcal O}_{{\mathbb P}(E')}(1)$ is ample, the line bundle
$$
{\mathcal O}_{{\mathbb P}(E')}(N(E_*))\,\longrightarrow\,
{\mathbb P}(E')
$$
is ample, where $N(E_*)$ is defined in \eqref{e9}.

The action of $\text{Gal}(\gamma)$ on $E'$ produces
a left action of $\text{Gal}(\gamma)$ on
${\mathbb P}(E')$.
Evidently, the variety ${\mathbb P}(E_*)$ in \eqref{e7} is the
quotient
\begin{equation}\label{g1}
\text{Gal}(\gamma)\backslash {\mathbb P}(E')\,=\,
{\mathbb P}(E_*)\, .
\end{equation}
We note that the quotient $\text{Gal}(\gamma)\backslash
{\mathcal O}_{{\mathbb P}(E')}(N(E_*))$ is a line bundle
over $\text{Gal}(\gamma)\backslash {\mathbb P}(E')$ because
the isotropy
subgroups, for the action of $\text{Gal}(\gamma)$ on
${\mathbb P}(E')$, act trivially on the corresponding fibers
of ${\mathcal O}_{{\mathbb P}(E')}(N(E_*))$. We have a
natural isomorphism of line bundles
\begin{equation}\label{g2}
\text{Gal}(\gamma)\backslash
{\mathcal O}_{{\mathbb P}(E')}(N(E_*))\,=\,
{\mathcal O}_{{\mathbb P}(E_*)}(1)\, .
\end{equation}
Since the line bundle
${\mathcal O}_{{\mathbb P}(E')}(N(E_*))$ is ample, from \eqref{g2}
it follows that the line bundle
${\mathcal O}_{{\mathbb P}(E_*)}(1)$ is ample.

To prove the converse, assume that the
tautological line bundle ${\mathcal O}_{{\mathbb P}(E_*)}(1)$
is ample. Take any parabolic vector bundle $F_*$ on $X$. As in the
first part of the proof of Theorem \ref{thm1}, take the covering
$\gamma$ in \eqref{e1} such that there is a $\text{Gal}
(\gamma)$--linearized vector bundle $E'$ (respectively,
$F'$) over $Y$ that corresponds to the parabolic vector
bundle $E_*$ (respectively, $F_*$). We will use the
criterion in Theorem \ref{thm1} to show that the
parabolic vector bundle $E_*$ is ample.

As the quotient map ${\mathbb P}(E') \,\longrightarrow\,
\text{Gal}(\gamma)\backslash {\mathbb P}(E')$ is a finite
morphism, the pullback of ${\mathcal O}_{{\mathbb P}(E_*)}(1)$
to ${\mathbb P}(E')$ is ample. Since this pullback is
${\mathcal O}_{{\mathbb P}(E')}(N(E_*))$ (see \eqref{g2}), we
conclude that the vector bundle $E'$ is ample.

Take any $i\,\geq\,1$. We have
$$
H^i(Y, \, S^n(E')\otimes F')\,=\, 0
$$
for all $n$ sufficiently large, because $E'$ is ample.
Since $\gamma$ is a finite map, this implies that
$$
H^i(X, \, \gamma_*(S^n(E')\otimes F'))\,=\,
H^i(Y, \, S^n(E')\otimes F')\,=\, 0
$$
for all $n$ sufficiently large. Now
$$
H^i(X, \, (S^n(E_*)\otimes F_*)_0)\,=\, 0
$$
for all $n$ sufficiently large because
$(S^n(E_*)\otimes F_*)_0$ is a direct summand of
$\gamma_*(S^n(E')\otimes F')$ (see \eqref{f1}).
Hence $E_*$ is ample by Theorem \ref{thm1}.
\end{proof}

\section{Parabolic $k$--ample bundles}

We recall from \cite{So} the definition of a $k$--ample vector
bundle. A line bundle $L$ over a complex projective
variety $M$ is called $k$--\textit{ample} if there is a positive
integer $t$ such that
\begin{itemize}
\item{} $L^{\otimes t}$ is base point free, and

\item all the fibers of the natural morphism $M\,\longrightarrow\,
{\mathbb P}(H^0(M, L^{\otimes t}))$ are of dimension at most $k$.
\end{itemize}
A vector bundle $E$ is called $k$--\textit{ample} if the
tautological line bundle ${\mathcal O}_{{\mathbb P}(E)}(1)\,
\longrightarrow\,{\mathbb P}(E)$ is $k$--ample.
(See Definition 1.3 in page 232 of \cite{So}.)

\begin{definition}\label{def3} 
{\rm A parabolic vector bundle $E_*$ over $X$ will be called}
$k$--ample {\rm if the tautological line bundle
${\mathcal O}_{{\mathbb P}(E_*)}(1)\,\longrightarrow\,
{\mathbb P}(E_*)$ (see Definition \ref{def2}) is $k$--ample.}
\end{definition}

We will express the cohomology of
a ${\rm Gal}(\gamma)$--linearized vector bundle in terms
of cohomology of some parabolic vector bundle.

Let $E_*$ be a parabolic vector bundle on $X$, and let
$E' \,\longrightarrow \, Y$ be the corresponding
${\rm Gal}(\gamma)$--linearized vector bundle, where
$\gamma\, :\, Y \,\longrightarrow \, X$ is a covering
as in \eqref{e1}. Let ${\mathbb C}[\Gamma]$ be the group
algebra of the finite group $\Gamma \, :=\, {\rm Gal}(\gamma)$.
The diagonal action of $\Gamma$ on $Y\times {\mathbb C}[\Gamma]$
constructed using the left action of $\Gamma$ on ${\mathbb C}[\Gamma]$
makes the trivial vector bundle
$$
{\mathcal V}' \, := \, Y\times {\mathbb C}[\Gamma]\,
\longrightarrow \, Y
$$
a $\Gamma$--linearized vector bundle. Let
$$
{\mathcal V}_*\,\longrightarrow \, X
$$

be the parabolic vector bundle $X$ corresponding to
this $\Gamma$--linearized vector bundle ${\mathcal V}'$.

\begin{lemma}\label{lem1}
There is a natural isomorphism
$$
\gamma_* E' \, \stackrel{\sim}{\longrightarrow}\,
(E_*\otimes {\mathcal V}_*)_0\, ,
$$
where $(E_*\otimes {\mathcal V}_*)_0$ is the vector
bundle underlying the parabolic tensor product
$E_*\otimes {\mathcal V}_*$.
\end{lemma}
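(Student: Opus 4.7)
The plan is to combine two facts already recalled in the paper with a classical identity about the regular representation. The two facts are: (a) the correspondence between parabolic bundles on $X$ and $\Gamma$-linearized bundles on $Y$ sends parabolic tensor products to ordinary tensor products (stated in the proof of Theorem \ref{thm1}); and (b) the underlying vector bundle of a parabolic bundle is recovered as the $\Gamma$-invariants of the pushforward of the corresponding $\Gamma$-linearized bundle (equation (2.9) of \cite{Bi1}, also cited in Theorem \ref{thm1}). Together, (a) and (b) identify
\[
(E_* \otimes {\mathcal V}_*)_0 \,=\, \bigl(\gamma_*(E' \otimes {\mathcal V}')\bigr)^\Gamma\, ,
\]
so it is enough to construct a natural $\mathcal{O}_X$-linear isomorphism $\gamma_* E' \stackrel{\sim}{\longrightarrow} \bigl(\gamma_*(E' \otimes {\mathcal V}')\bigr)^\Gamma$.

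For the construction I would first observe that ${\mathcal V}' \,=\, Y \times \mathbb{C}[\Gamma]$ is trivial as an $\mathcal{O}_Y$-module, so the projection formula gives
\[
\gamma_*(E' \otimes {\mathcal V}')\,=\, \gamma_* E' \otimes_{\mathbb{C}} \mathbb{C}[\Gamma]
\]
as $\mathcal{O}_X$-modules. Under this identification the diagonal $\Gamma$-action on $E' \otimes {\mathcal V}'$ (coming from the $\Gamma$-linearization of $E'$ and the left regular representation on $\mathbb{C}[\Gamma]$) becomes the diagonal action on $\gamma_* E' \otimes_{\mathbb{C}} \mathbb{C}[\Gamma]$, where $\Gamma$ acts on $\gamma_* E'$ through the linearization of $E'$ and on $\mathbb{C}[\Gamma]$ by left multiplication.

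At this point the standard fact about the regular representation supplies the map. I would define
\[
\phi \, :\, \gamma_* E' \,\longrightarrow\, \bigl(\gamma_* E' \otimes_{\mathbb{C}} \mathbb{C}[\Gamma]\bigr)^\Gamma,\qquad s \,\longmapsto\, \sum_{h \in \Gamma} (h \cdot s) \otimes [h]\, ,
\]
where $[h]$ denotes the basis element of $\mathbb{C}[\Gamma]$ indexed by $h\,\in\,\Gamma$. The inverse sends a $\Gamma$-invariant element $\sum_g s_g \otimes [g]$ to $s_e$: invariance forces $s_h\,=\, h \cdot s_e$ for every $h$, so the two maps are mutually inverse. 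Naturality in $E_*$ is transparent from the formula.

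The main (and really only) obstacle is the bookkeeping: one must verify that $\phi$ is $\mathcal{O}_X$-linear and that the two diagonal $\Gamma$-actions above genuinely match after pushforward. For $\mathcal{O}_X$-linearity one uses that any $f\,\in\,\mathcal{O}_X$ pulls back to a $\Gamma$-invariant function on $Y$, hence $h \cdot (f \cdot s)\,=\, f \cdot (h \cdot s)$ inside $\gamma_* E'$. Once this is checked, the lemma reduces to the sheafwise regular-representation identity $(W \otimes_{\mathbb{C}} \mathbb{C}[\Gamma])^\Gamma \,\cong\, W$ applied to $W\,=\,\gamma_* E'$.
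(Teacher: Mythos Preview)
Your argument is correct and follows the same route as the paper: both identify $(E_*\otimes{\mathcal V}_*)_0$ with $\bigl(\gamma_*(E'\otimes{\mathcal V}')\bigr)^\Gamma$ and then invoke the regular--representation identity $({\mathbb C}[\Gamma]\otimes_{\mathbb C} M_0)^\Gamma\cong M_0$, applied with $M_0=\gamma_*E'$. The paper phrases this last step via the action map ${\mathbb C}[\Gamma]\otimes_{\mathbb C} M_0\to M_0$, while you write down the explicit inverse $s\mapsto\sum_{h}(h\cdot s)\otimes[h]$ and verify ${\mathcal O}_X$--linearity; these are two descriptions of the same isomorphism.
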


\begin{proof}
The group algebra ${\mathbb C}[\Gamma]$ is a $\Gamma$--bimodule.
For any $\Gamma$--module $M_0$, consider the homomorphism
of $\Gamma$--modules
$$
{\mathbb C}[\Gamma]\otimes_{\mathbb C} M_0\,\longrightarrow\,
M_0
$$
defined by the action of $\Gamma$ on $M_0$. The composition
$$
({\mathbb C}[\Gamma]\otimes_{\mathbb C} M_0)^\Gamma
\,\hookrightarrow\,{\mathbb C}[\Gamma]\otimes_{\mathbb C} M_0
\,\longrightarrow\, M_0 
$$
is an isomorphism, where $({\mathbb C}[\Gamma]\otimes_{\mathbb C}
M_0)^\Gamma\, \subset\, {\mathbb C}[\Gamma]\otimes_{\mathbb C}M_0$
is the space of all $\Gamma$--invariants. Using 
this, the lemma follows immediately from the definition of
$(E_*\otimes {\mathcal V}_*)_0$.
\end{proof}

\begin{corollary}\label{cor1}
There is a natural isomorphism
$$
H^i(Y,\, E')\, \stackrel{\sim}{\longrightarrow}\,
H^i(X,\, (E_*\otimes {\mathcal V}_*)_0)
$$
for every $i\,\geq\, 0$.
\end{corollary}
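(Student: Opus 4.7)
The plan is to deduce this immediately from Lemma \ref{lem1} combined with the standard fact that cohomology is preserved by pushforward along a finite morphism.

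First I would take the natural isomorphism $\gamma_* E' \xrightarrow{\sim} (E_* \otimes \mathcal{V}_*)_0$ supplied by Lemma \ref{lem1} and apply the functor $H^i(X, -)$ to both sides, obtaining a natural isomorphism
$$
H^i(X,\, \gamma_* E')\, \stackrel{\sim}{\longrightarrow}\, H^i(X,\, (E_*\otimes \mathcal{V}_*)_0).
$$
Next I would invoke the fact that, since $\gamma \colon Y \longrightarrow X$ is a finite morphism, it is in particular affine, so the higher direct images $R^j\gamma_* E'$ vanish for all $j \geq 1$. The Leray spectral sequence $E_2^{p,q} = H^p(X, R^q\gamma_* E') \Rightarrow H^{p+q}(Y, E')$ then degenerates, yielding the canonical identification $H^i(Y, E') = H^i(X, \gamma_* E')$ for all $i \geq 0$. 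This identification was already used in the proof of Theorem \ref{thm1} (just after \eqref{a1}), so no new argument is required.

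Composing these two natural isomorphisms gives the desired isomorphism $H^i(Y, E') \xrightarrow{\sim} H^i(X, (E_* \otimes \mathcal{V}_*)_0)$. There is no real obstacle here: the only content is Lemma \ref{lem1}, and the corollary is a formal cohomological consequence. The entire proof should fit in two or three sentences.
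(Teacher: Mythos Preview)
Your proposal is correct and follows exactly the same approach as the paper: the paper's proof simply notes that $H^i(Y, E') = H^i(X, \gamma_* E')$ because $\gamma$ is finite, and then invokes Lemma~\ref{lem1}. Your write-up is slightly more detailed (mentioning the Leray spectral sequence explicitly), but the argument is identical.
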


\begin{proof}
Since $\Gamma$ is a finite map, we have
$$
H^i(Y,\, E')\,=\, H^i(X,\, \gamma_* E')\, .
$$
Now Lemma \ref{lem1} completes the proof.
\end{proof}

\begin{corollary}\label{cor2}
Take any vector bundle $W$ on $Y$. Let
$$
\widetilde{W}\, :=\, \bigoplus_{g\in \Gamma}
g^*W
$$
be the direct sum of all translates of $W$ by the
automorphisms of $Y$ lying in the Galois group. Consider the
natural $\Gamma$--linearization on $\widetilde{W}$. Let
$F_*$ be the parabolic vector bundle on $X$ corresponding to
the $\Gamma$--linearized vector bundle $\widetilde{W}$.
There is a natural isomorphism
$$
H^i(Y,\, \widetilde{W})\, \stackrel{\sim}{\longrightarrow}\,
H^i(X,\, (F_*\otimes {\mathcal V}_*)_0)
$$
for every $i\,\geq\, 0$. In particular,
$$
H^j(Y,\, W)\,=\, 0
$$
if $H^j(X,\, (F_*\otimes {\mathcal V}_*)_0)\,=\, 0$.
\end{corollary}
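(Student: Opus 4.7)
The plan is to obtain the first isomorphism by a direct application of Corollary \ref{cor1}, and then to deduce the vanishing statement from the decomposition of $H^i(Y,\widetilde{W})$ into summands indexed by $\Gamma$.

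More concretely, I would first observe that the hypothesis of Corollary \ref{cor1} is completely general: it takes any parabolic bundle on $X$ and its corresponding $\Gamma$-linearized bundle on $Y$ and produces the asserted isomorphism of cohomologies. Since $F_*$ is by construction the parabolic bundle on $X$ corresponding to the $\Gamma$-linearized vector bundle $\widetilde{W}$, applying Corollary \ref{cor1} with $E_*$ replaced by $F_*$ and $E'$ replaced by $\widetilde{W}$ gives the stated natural isomorphism
$$
H^i(Y,\,\widetilde{W})\,\stackrel{\sim}{\longrightarrow}\,H^i(X,\,(F_*\otimes\mathcal{V}_*)_0)
$$
for every $i\,\geq\,0$. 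This is essentially a bookkeeping step once one checks that $\widetilde{W}$ is $\Gamma$-linearized via permutation of summands induced by the left-translation action, which is exactly the natural $\Gamma$-structure mentioned in the statement.

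For the vanishing statement, the key observation is that as an ordinary (non-equivariant) vector bundle on $Y$, we have a splitting
$$
\widetilde{W}\,=\,W\,\oplus\,\bigoplus_{g\in\Gamma,\,g\neq e}g^*W,
$$
so $W$ is a direct summand of $\widetilde{W}$. Passing to cohomology, $H^j(Y,W)$ is a direct summand of $H^j(Y,\widetilde{W})$. Therefore if $H^j(X,\,(F_*\otimes\mathcal{V}_*)_0)\,=\,0$, then by the first part $H^j(Y,\widetilde{W})\,=\,0$, and consequently $H^j(Y,W)\,=\,0$.

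There is no serious obstacle here; the corollary is a purely formal consequence of Corollary \ref{cor1}. The only mild subtlety is the distinction between the $\Gamma$-equivariant structure on $\widetilde{W}$ (which permutes summands and is needed to apply Corollary \ref{cor1}) and the underlying vector bundle decomposition (which is what yields $W$ as a direct summand for the cohomological vanishing). Once this distinction is kept straight, the proof is immediate.
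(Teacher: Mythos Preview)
Your proposal is correct and matches the paper's proof essentially verbatim: the paper also obtains the first isomorphism by specializing Corollary~\ref{cor1} to $E_*=F_*$, and deduces the vanishing from the fact that $W$ is a direct summand of $\widetilde{W}$.
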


\begin{proof}
The first statement in the corollary is a special case of
Corollary \ref{cor1}: set the parabolic vector bundle $E_*$
in Corollary \ref{cor1} to be $F_*$. The
second part of the corollary follows from the first
part because $W$ is a direct summand of $\widetilde{W}$.
\end{proof}

The following proposition is the parabolic analog of
Proposition 1.7 in page 233 of \cite{So}.

\begin{proposition}\label{prop2}
Let $E_*$ be parabolic vector bundle on $X$ such that the line
bundle ${\mathcal O}_{{\mathbb P}(E_*)}(t)$ is base point
free for some positive integer $t$. Then $E_*$ is $k$--ample
ample if and only if for
every parabolic vector bundle $F_*$ on $X$, there is
an integer $n(F_*)$ such that for all $n\,\geq\, n(F_*)$,
$$
H^i(X,\, (S^n(E_*)\otimes F_*)_0) \,=\, 0
$$
for all $i\,\geq \, k+1$, where $(S^n(E_*)\otimes F_*)_0$
is the vector bundle underlying the parabolic tensor
product $S^n(E_*)\otimes F_*$.
\end{proposition}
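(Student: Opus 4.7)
The plan is to reduce to Sommese's cohomological criterion for $k$-ample bundles over ordinary projective varieties (Proposition 1.7 of \cite{So}) by passing through the Galois cover $\gamma\colon Y\to X$ of \eqref{e1}, along the same lines as Theorem \ref{thm1} was deduced from Hartshorne's criterion for ampleness. Let $E'\to Y$ be the $\Gamma$-linearized bundle corresponding to $E_*$. The quotient map $\pi\colon\mathbb{P}(E')\to\mathbb{P}(E_*)$ in \eqref{g1} is finite and surjective, and \eqref{g2} identifies $\pi^*\mathcal{O}_{\mathbb{P}(E_*)}(1)$ with $\mathcal{O}_{\mathbb{P}(E')}(N(E_*))$; consequently the hypothesis that $\mathcal{O}_{\mathbb{P}(E_*)}(t)$ is base point free ensures the same for $\mathcal{O}_{\mathbb{P}(E')}(tN(E_*))$, and $\mathcal{O}_{\mathbb{P}(E_*)}(1)$ is $k$-ample if and only if $E'$ itself is $k$-ample on $Y$, since $k$-ampleness is both preserved and detected by finite surjective pullbacks and is invariant under passage to positive tensor powers.

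For the forward direction, assume $E_*$ is $k$-ample, so that $E'$ is $k$-ample. Given any parabolic $F_*$ with associated $\Gamma$-linearized $F'\to Y$, Sommese's Proposition 1.7 applied on $Y$ gives $H^i(Y,S^n(E')\otimes F')=0$ for all $i\geq k+1$ and $n\gg 0$. Since $\gamma$ is finite we have $H^i(X,\gamma_*(S^n(E')\otimes F'))=0$, and exactly as in the proof of Theorem \ref{thm1} the sheaf $(S^n(E_*)\otimes F_*)_0$ appears as a direct summand of $\gamma_*(S^n(E')\otimes F')$, yielding the desired vanishing.

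For the converse, assume the cohomology vanishing and let $W$ be any vector bundle on $Y$. Form $\widetilde{W}=\bigoplus_{g\in\Gamma}g^*W$ as in Corollary \ref{cor2} and let $F_*$ be the corresponding parabolic bundle. The $\Gamma$-linearization of $E'$ identifies $\widetilde{S^n(E')\otimes W}$ with $S^n(E')\otimes\widetilde{W}$ equivariantly, whose associated parabolic bundle is $S^n(E_*)\otimes F_*$; applying Corollary \ref{cor2} to the vector bundle $S^n(E')\otimes W$ therefore gives
\[
H^i(Y,\,S^n(E')\otimes W)\,=\,0 \quad\text{whenever}\quad H^i\bigl(X,\,(S^n(E_*)\otimes F_*\otimes\mathcal{V}_*)_0\bigr)\,=\,0.
\]
The hypothesis, applied to the parabolic bundle $F_*\otimes\mathcal{V}_*$, supplies the right-hand vanishing for $i\geq k+1$ and $n\gg 0$. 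Hence Sommese's Proposition 1.7 yields that $E'$ is $k$-ample on $Y$, which forces $E_*$ to be $k$-ample by the reduction of the first paragraph.

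The main technical point, on which everything else hinges, is the two-way compatibility recorded in paragraph one: one needs both that pullback by the finite surjection $\pi$ preserves $k$-ampleness (straightforward from the description in terms of fibre dimensions of the morphism to $|L^{\otimes t}|$) and that, conversely, $k$-ampleness of $\pi^*L$ forces $k$-ampleness of $L$, which is the standard finite-descent property of $k$-ample line bundles. Once this descent along $\pi$ is in place, the rest of the argument is essentially bookkeeping through the parabolic/$\Gamma$-equivariant correspondence and Corollary \ref{cor2}.
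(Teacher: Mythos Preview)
Your proof is correct and follows the same route as the paper's: both directions pass to the cover $Y$ via \eqref{g1}--\eqref{g2}, invoke Sommese's Proposition~1.7 for $E'$, and transfer cohomology through the direct-summand argument (forward) and Corollary~\ref{cor2} applied to $S^n(E')\otimes W$ with the hypothesis used for $F_*\otimes\mathcal V_*$ (converse), exactly as you do. The only cosmetic difference is that you spell out the two-way compatibility of $k$-ampleness with the finite map $\pi$, which the paper invokes tacitly by citing \eqref{g1} and \eqref{g2}.
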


\begin{proof}
First assume that $E_*$ is $k$--ample. Let
$$
\psi\, :\,
{\mathbb P}(E')\,\longrightarrow\, \text{Gal}(\gamma)\backslash
{\mathbb P}(E') \,=\, {\mathbb P}(E_*)
$$
be the quotient map; see \eqref{g1}. As $E_*$ is $k$--ample,
the line bundle ${\mathcal O}_{{\mathbb P}(E_*)}(1)$ is, by
definition, $k$--ample. Since $\psi$ is a finite
morphism, the pullback $\psi^*{\mathcal O}_{{\mathbb P}(E_*)}(1)$
is also $k$--ample. But $\psi^*{\mathcal O}_{{\mathbb P}(E_*)}(1)
\,=\,{\mathcal O}_{{\mathbb P}(E')}(N(E_*))$ (see \eqref{g2}),
so ${\mathcal O}_{{\mathbb P}(E')}(N(E_*))$ is 
$k$--ample. Hence the vector bundle $E'$ over $Y$ is $k$--ample.

Let $F_*$ be a parabolic vector bundle on $X$, and let
$F'$ be the corresponding $\Gamma$--linearized vector
bundle on $Y$. Since $E'$ is $k$--ample, there is
an integer $n(E')$ such that for all $n\,\geq\, n(F')$,
$$
H^i(Y,\, S^n(E')\otimes F') \,=\, 0
$$
for all $i\,\geq \, k+1$ (Proposition 1.7 in page 233 of \cite{So}).

We have already seen in the proof of Theorem \ref{thm1}
(also in the proof of Proposition \ref{prop1}) that
$H^i(X,\, (S^n(E_*)\otimes F_*)_0)$ is a subspace of
$H^i(Y,\, S^n(E')\otimes F')$. Therefore, we conclude that
$$
H^i(X,\, (S^n(E_*)\otimes F_*)_0) \,=\, 0
$$
for all $i\,\geq \, k+1$ and all $n\,\geq\, n(F')$.

To prove the converse, assume that
for
every parabolic vector bundle $F_*$, there is
an integer $n(F_*)$ such that for all $n\,\geq\, n(F_*)$,
$$
H^i(X,\, (S^n(E_*)\otimes F_*)_0) \,=\, 0
$$
for all $i\,\geq \, k+1$.

In view of \eqref{g1} and \eqref{g2}, to prove that
the parabolic vector bundle $E_*$ is $k$--ample, it suffices to
show that the corresponding vector bundle
$E'\,\longrightarrow\, Y$ is $k$--ample.

Take any vector bundle $V\,\longrightarrow\, Y$. Set
the vector bundle $W$ in Corollary \ref{cor2} to be
$S^n(E')\otimes V$. From the second part of
Corollary \ref{cor2}, together with the given condition
on $E_*$, we conclude that there is 
an integer $n(V)$ such that
$$
H^i(Y,\, S^n(E')\otimes V) \,=\, 0
$$
for all $i\,\geq \, k+1$ and $n\,\geq\, n(V)$. Now from
Proposition 1.7 in page 233 of
\cite{So} it follows that the vector bundle $E'$ is $k$--ample.
\end{proof}

We can translate the various vanishing theorems on
$k$--ample vector bundles to parabolic $k$--ample bundles.
More precisely, using the correspondence between parabolic
vector bundles and $\Gamma$--linearized vector bundles, any
vanishing result on $k$--ample vector bundles will give a
corresponding vanishing result on parabolic $k$--ample
bundles. Here is an example.

If $V$ is a $k$--ample vector bundle of rank $r$ on a complex
smooth projective variety $M$ of dimension $d$, then
$$
H^q(M, \, \Omega^p_M\otimes V) \,=\, 0
$$
if $p+q\,\geq\, d+r+k$ (Corollary 5.20 in page 96 of \cite{ShS}).
This yields the following:

Let $E_*$ be a parabolic $k$--ample vector bundle of rank $r$ on
$X$; as before, the underlying vector bundle is denoted by $E_0$.
Assume that $\lambda^i_1$ in \eqref{li} is nonzero
for all $i\,\in\, [1\, ,\ell]$. Then
$$
H^q(X, \, \Omega^p_X(\log D)\otimes E_0) \,=\, 0
$$
if $p+q\,\geq\, r+k+\dim X$ (see Theorem 4.4 in page 521 of
\cite{Bi2}). If some $\lambda^i_1$ are zero,
then $\Omega^p_X(\log D)\otimes E_0$ should be replaced
by the vector bundle $\overline{E}_p$ in Corollary 4.14
in page 527 of \cite{Bi2}.


\end{document}